\definecolor{black}{cmyk}{1 1 1 1}
\patchcmd{\@maketitle}{center}{flushleft}{}{}
\patchcmd{\@maketitle}{center}{flushleft}{}{}
\patchcmd{\@maketitle}{\LARGE}{\LARGE\bf\boldmath}{}{}
\def\maketitle{{%
  \renewenvironment{tabular}[2][]
    {\begin{flushleft}}
    {\end{flushleft}}
  \AB@maketitle}}
\newcommand{\email}[1]{{\rm\href{mailto:#1}{#1}}}
\renewenvironment{abstract}
  {\noindent\textbf{Abstract:}\quad}
  {}
\newcommand\keywords[1]{%
  \let\svthefootnote\thefootnote
  \let\thefootnote\relax\footnotetext{\textbf{Key words and phrases:} #1}%
  \let\thefootnote\svthefootnote%
}
\newcommand\classification[1]{%
  \let\svthefootnote\thefootnote
  \let\thefootnote\relax\footnotetext{\textbf{AMS (MOS) Subject Classifications:} #1}%
  \let\thefootnote\svthefootnote%
}
\theoremstyle{definition}
\newtheorem{conjecture*}{Conjecture}
\newtheorem{construction}{Construction}[section]
\theoremstyle{plain}
\newtheorem{theorem}[construction]{Theorem}
\newtheorem{theorem*}{Theorem}
\newtheorem{corollary}[construction]{Corollary}
\theoremstyle{remark}
\newcommand{\GCD}[1]
{
 \IfEq{#1}{}
  {\texttt{gcd}}
  {\texttt{gcd}\ensuremath{\left(#1\right)}}
}
\newcommand{\PSTS}[1]
{
 \IfEq{#1}{}
  {\textsc{psts}}
  {\textsc{psts}\ensuremath{\left(#1\right)}}
}
\newcommand{\STS}[1]
{
 \IfEq{#1}{}
  {\textsc{sts}}
  {\textsc{sts}\ensuremath{\left(#1\right)}}
}
\newcommand{\RGDD}[2]
{
 \IfEq{#1}{}
  {\textsc{rgdd}}
  {\ensuremath{#1}\text{-}\textsc{rgdd}}
 \IfEq{#2}{}
  {}
  {\text{ of type }\ensuremath{#2}}
}
\newcommand{\IGDD}[2]
{
 \IfEq{#1}{}
  {\textsc{igdd}}
  {\ensuremath{#1}\text{-}\textsc{igdd}}
 \IfEq{#2}{}
  {}
  {\text{ of type }\ensuremath{#2}}
}
\newcommand{\IGDDS}[2]
{
 \IfEq{#1}{}
  {\textsc{igdd}s}
  {\ensuremath{#1}\text{-}\textsc{igdd}s}
 \IfEq{#2}{}
  {}
  {\text{ of type }\ensuremath{#2}}
}
\newcommand{\GDD}[2]
{
 \IfEq{#1}{}
  {\textsc{gdd}}
  {\ensuremath{#1}\text{-}\textsc{gdd}}
 \IfEq{#2}{}
  {}
  {\text{ of type }\ensuremath{#2}}
}
\newcommand{\GDDS}[2]
{
 \IfEq{#1}{}
  {\textsc{gdd}s}
  {\ensuremath{#1}\text{-}\textsc{gdd}s}
 \IfEq{#2}{}
  {}
  {\text{ of type }\ensuremath{#2}}
}
\newcommand{\PBD}[2]
{
 \IfEq{#1}{}
  {\textsc{pbd}}
  {\ensuremath{#1}\text{-}\textsc{pbd}}
 \IfEq{#2}{}
  {}
  {\ensuremath{\left(#2\right)}}
}
\newcommand{\TD}[1]
{
 \IfEq{#1}{}
  {\textsc{td}}
  {\textsc{td}\ensuremath{\left(#1\right)}}
}
\newcommand{\OA}[1]
{
 \IfEq{#1}{}
  {\textsc{OA}}
  {\textsc{OA}\ensuremath{\left(#1\right)}}
}
\newcommand{\MOLS}[1]
{
 \IfEq{#1}{}
  {\textsc{mols}}
  {\textsc{mols}\ensuremath{\left(#1\right)}}
}
\newcommand{\Field}[1]
{
 \IfEq{#1}{}
  {\mathbb{F}}
  {\mathbb{F}\ensuremath{_{#1}}}
}
\newcommand{\Integers}[1]
{
 \IfEq{#1}{}
  {\mathbb{Z}}
  {\mathbb{Z}\ensuremath{_{#1}}}
}
\title{Nonsequenceable Steiner triple systems}
\author[1]{Donald L. Kreher}
\author[2]{Douglas R. Stinson}
\affil[1]{Michigan Technological University%
\newline Houghton, Michigan 49931 U.S.A.%
\newline\email{kreher@mtu.edu}
\bigskip
}
\affil[2]{David R. Cheriton School of Computer Science%
\newline University of Waterloo%
\newline Waterloo, Ontario N2L 3G1, Canada%
\newline\email{dstinson@uwaterloo.ca}
}
\date{}
\begin{document}
\maketitle
\begin{abstract}
A partial Steiner triple system is 
is \emph{sequenceable} if the points can be sequenced so
that no proper segment can be partitioned into blocks.
We show that, 
if $0 \leq a \leq (n-1)/3$, then
there exists a nonsequenceable $\PSTS{n}$
of size $\frac{1}{3}\binom{n}{2}-a$, 
for all $n \equiv 1 \pmod{6}$ except for $n=7$.
\end{abstract}

\section{Introduction}
A decomposition of the complete graph on $n$ points into 
triangles is called a \emph{Steiner triple system} 
of order $n$ and is denoted by $\STS{n}$. The vertex sets of 
the triples used are called the \emph{blocks} of the Steiner 
triple system. Thus, equivalently, an $\STS{n}$ is a pair
$(X,B)$, where $X$ is an $n$-element set of \emph{points}
and $B$ is a collection of $3$-element subsets of $X$
called \emph{blocks}, such that every pair of points is contained 
in exactly one block. It is well known that an $\STS{n}$ 
exists if and only if $n \equiv 1,3 \pmod{6}$. A decomposition of a proper subgraph of the complete graph on $n$ points into
triangles is called a \emph{partial Steiner triple system}
of order $n$ and is denoted by $\PSTS{n}$. The \emph{size} of a $\PSTS{n}$ is the number of blocks it contains.

An $\STS{n}$ is \emph{sequenceable} if the points can be sequenced so 
that no proper segment can be partitioned into blocks.
Such a sequence is called an \emph{admissible sequence}.
If  an $\STS{n}$ has no admissible sequence, then we say it is
\emph{nonsequenceable}.  
For example, $\{\mathit{abd}, \mathit{bce}, \mathit{cdf}, \mathit{deg}, \mathit{efa}, \mathit{fgb}, \mathit{gac}\}$,
the unique (up to isomorphism) $\STS{7}$ has the admissible
sequence $\mathit{abcdefg}$. 
A fascinating study of sequenceable partial 
Steiner triple systems can be found in~\cite{man41}.

In \cite{KS} a related problem is examined. In this article the authors ask if the vertices of a Steiner triple system can be sequenced such that no length $\ell$ segment of the sequence contains a block. They obtain results for $\ell=3$ and $4$.

A set of disjoint blocks in an $\STS{n}$ is called a 
\emph{partial parallel class}.
Clearly any partial parallel class contains at most 
$\lfloor v/3 \rfloor$ blocks. A partial parallel class 
containing all the points of a design is called a \emph{parallel 
class} and a partial parallel class containing all but one 
of the points of a design is called an \emph{almost parallel class}. 

\begin{theorem}\label{one}
Suppose a  $\PSTS{n}$ has the property, for $n-1$ distinct points $x$, that 
there is an almost parallel class
that does not contain $x$. Then the $\PSTS{n}$
 is nonsequenceable.
\end{theorem}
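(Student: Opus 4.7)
The plan is to argue by contradiction, using the two endpoints of a hypothesized admissible sequence.

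Suppose the \PSTS{n} were sequenceable, with admissible sequence $\pi = x_1 x_2 \cdots x_n$. The hypothesis tells us that the set $Y$ of points $x$ for which there is an almost parallel class avoiding $x$ has size at least $n-1$, so at most one point of the design fails to have this property. Since $x_1$ and $x_n$ are distinct points, at least one of them — say $x_1$, without loss of generality — must lie in $Y$.

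Now I would invoke the almost parallel class $\mathcal{A}$ that avoids $x_1$. Because $\mathcal{A}$ is an almost parallel class missing exactly the point $x_1$, its blocks partition the point set $X \setminus \{x_1\} = \{x_2, x_3, \ldots, x_n\}$. But $\{x_2, \ldots, x_n\}$ is precisely the set of points appearing in the proper segment $x_2 x_3 \cdots x_n$ of $\pi$ (a proper segment since it omits $x_1$). Hence this proper segment can be partitioned into blocks, contradicting the admissibility of $\pi$. If instead $x_1 \notin Y$ and $x_n \in Y$, the symmetric argument with the initial proper segment $x_1 x_2 \cdots x_{n-1}$ yields the same contradiction.

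There is no real obstacle here: the whole argument rests on the simple pigeonhole observation that a sequence has two distinct endpoints, so at least one of them lies in the $(n-1)$-element set guaranteed by the hypothesis, and the almost parallel class avoiding that endpoint immediately exhibits a proper segment (of length $n-1$) that partitions into blocks. The only point to be careful about is verifying that the segment of length $n-1$ qualifies as \emph{proper}, which is clear since $n \equiv 1 \pmod{6}$ forces $n \geq 2$ (indeed the interesting cases have $n \geq 7$) so the segment omits at least one point of $\pi$.
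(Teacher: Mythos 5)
Your proof is correct and follows essentially the same route as the paper's: observe that at least one of the two endpoints $x_1$, $x_n$ admits an almost parallel class avoiding it, and that class partitions the corresponding proper segment of length $n-1$, contradicting admissibility. The paper states this directly for an arbitrary sequence rather than framing it as a contradiction, but the argument is identical.
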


\begin{proof}
Consider any sequence $x_1x_2\cdots x_n$ of the vertices 
of such a $\PSTS{n}$.   There is either an almost parallel  class that 
does not contain $x_1$ or one that does not contain $x_n$.  
Thus the blocks of the first almost  parallel class, if it exists, would
partition the segment $x_2\cdots x_{n-1}x_n$,
and the latter would partition
the segment $x_1x_2\cdots x_{n-1}$.
\end{proof}

\section{Example applications of Theorem~\ref{one}}\label{examples}
\begin{description}[leftmargin=1em]
\item[\boldmath\STS{13}:]
Developing the base blocks

\hfill\begin{minipage}{.875\textwidth}\begin{flushleft}
$\{0,2,7\}$,
$\{0,1,4\}$,
\end{flushleft}\end{minipage}

modulo 13 generates an $\STS{13}$ that contains the almost parallel class 

\hfill\begin{minipage}{.875\textwidth}\begin{flushleft}
\noindent%
\{7,8,11\}$,
\{3,5,10\}$,
\{2,4,9\}$,
\{1,6,12\}$.
\end{flushleft}\end{minipage}

Because any translate of this almost parallel class is again an almost parallel class, there is an almost parallel class missing any desired point.
Thus, by Theorem~\ref{one} this $\STS{13}$ is nonsequenceable.

\item[\boldmath\STS{19}:]
Developing the base blocks 

\hfill\begin{minipage}{.875\textwidth}\begin{flushleft}
$\{0,1,6\}$,
$\{0,2,10\}$,
$\{0,3,7\}$,
\end{flushleft}\end{minipage}

modulo 19 generates an $\STS{19}$ that contains the almost parallel class 

\hfill\begin{minipage}{.875\textwidth}\begin{flushleft}
\noindent%
$\{1,3,11\}$,
$\{2,15,16\}$,
$\{4,17,18\}$,
$\{5,8,12\}$,
$\{6,9,13\}$,
$\{7,10,14\}$.
\end{flushleft}\end{minipage}

Thus by Theorem~\ref{one} this $\STS{19}$ is nonsequenceable.

\item[\boldmath\STS{25}:]
Developing the base blocks

\hfill\begin{minipage}{.875\textwidth}\begin{flushleft}
\noindent%
$\big\{(0,0),(0,1),(2,3)\big\}$,
$\big\{(0,0),(1,2),(2,0)\big\}$,
$\big\{(0,0),(1,0),(3,1)\big\}$,
\end{flushleft}\end{minipage}

modulo 5 independently in both coordinates generates a
$\STS{25}$ with vertices $\Integers{5}\times\Integers{5}$
that contains the almost parallel class

\hfill\begin{minipage}{.875\textwidth}\begin{flushleft}
\noindent%
$\big\{(0,1),(0,2),(2,4)\big\}$,
$\big\{(1,0),(3,2),(1,4)\big\}$,
$\big\{(1,1),(4,1),(0,3)\big\}$,
$\big\{(2,0),(2,3),(3,4)\big\}$,
$\big\{(2,1),(2,2),(4,4)\big\}$,
$\big\{(3,0),(1,2),(1,3)\big\}$,
$\big\{(3,1),(4,2),(3,3)\big\}$,
$\big\{(4,0),(4,3),(0,4)\big\}$.
\end{flushleft}\end{minipage}

It follows from Theorem~\ref{one} that
this $\STS{25}$ is nonsequenceable.
\item[\boldmath\STS{31}:]
Developing the base blocks 

\hfill\begin{minipage}{.875\textwidth}\begin{flushleft}
\noindent%
$\{0,5,11\}$,
$\{0,4,12\}$,
$\{0,3,13\}$,
$\{0,2,9\}$,
$\{0,1,15\}$,
\end{flushleft}\end{minipage}

modulo 31 generates an $\STS{31}$ that contains the almost parallel class 

\hfill\begin{minipage}{.875\textwidth}\begin{flushleft}
\noindent%
$\{11,15,23\}$,
$\{10,26,27\}$,
$\{9,14,20\}$,
$\{8,13,19\}$,
$\{7,25,28\}$,
$\{5,21,22\}$,
$\{4,24,29\}$,
$\{3,6,16\}$,
$\{2,12,30\}$,
$\{1,17,18\}$.
\end{flushleft}\end{minipage}

Thus by Theorem~\ref{one} this $\STS{31}$ is nonsequenceable.
\item[\boldmath\STS{43}:]
Developing the base blocks

\hfill\begin{minipage}{.875\textwidth}\begin{flushleft}
\noindent%
$\{0,1,16\}$,
$\{0,2,14\}$,
$\{0,3,11\}$,
$\{0,4,37\}$,
$\{0,5,25\}$,
$\{0,7,24\}$,
$\{0,9,22\}$,
\end{flushleft}\end{minipage}

modulo 43 generates an $\STS{43}$ that contains the almost 
parallel class 

\hfill\begin{minipage}{.875\textwidth}\begin{flushleft}
\noindent$\{14,15,30\}$,
$\{1,28,29\}$,
$\{2,20,25\}$,
$\{3,23,41\}$,
$\{4,33,35\}$,
$\{5,34,36\}$,
$\{6,19,40\}$,
$\{7,39,42\}$,
$\{8,26,31\}$,
$\{9,27,32\}$,
$\{10,37,38\}$,
$\{11,17,21\}$,
$\{12,18,22\}$,
$\{13,16,24\}$.
\end{flushleft}\end{minipage}

Thus by Theorem~\ref{one} this $\STS{43}$ is nonsequenceable.

\end{description}

\section{Constructions}

A group divisible design with blocks of size 3, having 
$u$ groups of size $g$ and $v$ groups of size $m$ (denoted by $\GDD{3}{g^um^v}$),
is a decomposition of the complete multipartite graph
\[
 K_{\underbrace{g,g,\ldots,g}_u,\underbrace{m,m,\ldots,m}_v}
\]
into triangles. The triangles (or triples ) used in the decomposition are the blocks of the $\textsc{gdd}$.
If $6\mid m$ and $6 \mid g$, then a $\GDD{3}{m^u}$ and
a $\GDD{3}{g^um^1}$ exist for all $u\geq 3$  
\cite{HB555,HB2211}.

\begin{theorem}\label{A}
There exists a nonsequenceable $\STS{n}$ for all $n  \equiv  1 \pmod{6}$ except for $n=7$.
\end{theorem}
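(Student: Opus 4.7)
The plan is to combine the five explicit nonsequenceable examples of Section~\ref{examples} with a group-divisible design construction. Section~\ref{examples} already supplies nonsequenceable systems for $n\in\{13,19,25,31,43\}$, so the remaining values are $n=37$ and $n\ge 49$ with $n\equiv 1\pmod{6}$. I would split these modulo~$12$: if $n\equiv 1\pmod{12}$, take $u=(n-1)/12\ge 3$ and use a $\GDD{3}{12^u}$; if $n\equiv 7\pmod{12}$ (so $n\ge 55$, since $n=43$ is a base case), take $u=(n-19)/12\ge 3$ and use a $\GDD{3}{12^u 18^1}$. Both designs exist by the result quoted from~\cite{HB555,HB2211}. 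I then adjoin one new point $\infty$ to the GDD and, on each group $G$ together with $\infty$, install a copy of the cyclic $\STS{|G|+1}$ from Section~\ref{examples}: the $\STS{13}$ for the groups of size~$12$ and the $\STS{19}$ for the group of size~$18$ when present. The union of the GDD blocks with the blocks of the filled-in STSs is an $\STS{n}$, because every pair is covered exactly once---pairs inside a single group and pairs of the form $\{\infty,g\}$ by the filled-in STS on that group $\cup\{\infty\}$, and pairs of points in distinct groups by the GDD.

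The key feature of each cyclic base construction is that every translate of the displayed almost parallel class is again an almost parallel class, so the filled-in STS on $G\cup\{\infty\}$ admits an almost parallel class that misses any prescribed point of $G\cup\{\infty\}$---in particular one missing $\infty$. To invoke Theorem~\ref{one}, I produce for every point $y$ of the constructed $\STS{n}$ an almost parallel class avoiding $y$. If $y$ lies in a group $G$, I choose an almost parallel class of the filled-in STS on $G\cup\{\infty\}$ that omits $y$, and for each other group $G'$ I choose an almost parallel class of the filled-in STS on $G'\cup\{\infty\}$ that omits $\infty$; these blocks are pairwise disjoint and cover exactly the point set minus $y$. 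If $y=\infty$, I instead take, in every filled-in STS, an almost parallel class missing $\infty$. In either case Theorem~\ref{one} certifies that the resulting $\STS{n}$ is nonsequenceable.

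The point requiring the most care is organizational. One must verify that the arithmetic covers every remaining $n\equiv 1\pmod 6$ with $u\ge 3$ in the appropriate sub-case, and that the group sizes are always $12$ or $18$ so that no sequenceable $\STS{7}$ is ever required as a fill-in. One must also confirm that the cyclic developments in Section~\ref{examples} act transitively on the point set of each base STS, which is what permits $\infty$ to be identified with any prescribed point and thus to be missed by a translate of the displayed almost parallel class.
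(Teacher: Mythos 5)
Your proposal is correct and follows essentially the same route as the paper: the same five base cases from Section~\ref{examples}, the same split into $n\equiv 1\pmod{12}$ (filling a $\GDD{3}{12^u}$ plus one new point with copies of the nonsequenceable $\STS{13}$) and $n\equiv 7\pmod{12}$ (filling a $\GDD{3}{12^u18^1}$ plus one new point, using the $\STS{19}$ on the large group), and the same verification via Theorem~\ref{one} by assembling almost parallel classes that miss the new point in all but one group. No substantive differences.
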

\begin{proof}
First suppose $n = 1+6k$ with $k=2u$.
If $u=1$, then $n=13$ and if $u=2$, then $n=25$.  
For both orders $n=13$ or $25$, there is an example of a nonsequenceable $\STS{n}$ given in Section~\ref{examples}. 

If $u \geq 3$, there exist a $\GDD{3}{12^u}$ with groups 
(partite sets) $G_1,G_2,\ldots , G_u$.
Let $X$ be a new point.
In each group $G_i$, fill in 
$G_i \cup \{X\}$
with the  nonsequenceable
$\STS{13}$  found in Section~\ref{examples}.

We now show this $\textsc{STS}$ satisfies the the conditions of Theorem~\ref{one}.
Clearly it is satisfied for the point X. 
For any other point, say $y \in G_i$, take the almost
parallel class in the $i$-th $\STS{13}$ that misses $y$, 
and for all $j \neq i$,
take the almost parallel class in the $j$-th $\STS{13}$ 
that misses $X$. The union
of these partial parallel classes is an almost parallel class that misses $y$.

Now suppose $n = 1+6k$ with $k=2u+3$.
If $u = -1$, then $n=7$  and, as noted in the Theorem, a
nonsequenceable $\STS{7}$ does not exist. 
If $n=0,1$ or $2$, then $n=19,31$ or $43$ respectively.
There is an example of a nonsequenceable $\STS{n}$ given in of Section~\ref{examples} for each $n \in \{19,31,43\}$.
If $u \geq 3$, there exists
a $\GDD{3}{12^u18^1}$ with groups (partite sets)
$G_1,G_2,\ldots ,G_u,M$, where $|G_i|=12$ for all $i$ and $|M|=18$.
Let $X$ be a new point.
In each group $G_i$, fill in 
$G_i \cup \{X\}$ 
with the  nonsequenceable  $\STS{13}$ 
given in Section~\ref{examples}. On $M \cup \{X\}$, place the nonsequenceable  $\STS{19}$ found in Section~\ref{examples}. The rest of the proof is the same as 
for $k$ even, and we leave it for the reader to check the details.
\end{proof}

\begin{corollary}
If $0 \leq a \leq (n-1)/3$, then
there exists a nonsequenceable $\PSTS{n}$ 
of size $\frac{1}{3}\binom{n}{2}-a$,
for all $n  \equiv  1 \pmod{6}$ except for $n=7$.
\end{corollary}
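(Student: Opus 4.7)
The plan is to start with a nonsequenceable $\STS{n}$ supplied by Theorem~\ref{A} and to delete $a$ of its blocks while preserving the hypothesis of Theorem~\ref{one}; the resulting $\PSTS{n}$ then has size $\frac{1}{3}\binom{n}{2}-a$ and is automatically nonsequenceable.

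For the orders produced by the filling construction inside Theorem~\ref{A}---that is, $n = 12u+1$ with $u \geq 3$ and $n = 12u+19$ with $u \geq 3$---the key observation is that every almost parallel class exhibited in Theorem~\ref{A}'s proof is assembled entirely from almost parallel classes internal to the filled $\STS{13}$ copies (together with the single $\STS{19}$ in the second family), and hence uses no block of the underlying $\GDD{3}{12^u}$ or $\GDD{3}{12^u18^1}$. These GDDs contribute $24u(u-1)$ and $24u(u+2)$ blocks respectively, vastly more than $(n-1)/3$ for $u \geq 3$, so I would simply delete any $a$ blocks from the GDD portion. Every almost parallel class produced in Theorem~\ref{A}'s proof survives intact in the resulting $\PSTS{n}$, and Theorem~\ref{one} applies.

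This leaves the five small orders $n \in \{13,19,25,31,43\}$ handled by the explicit cyclic examples in Section~\ref{examples}. For each of these, the $n$ cyclic translates of the displayed almost parallel class already furnish one almost parallel class missing each point, and each design admits many further noncyclic almost parallel classes. For each such $n$ and each $a \leq (n-1)/3$ one verifies directly, most cleanly by a short computer check, that $a$ blocks can be chosen for deletion so that almost parallel classes missing at least $n-1$ distinct points persist in the resulting $\PSTS{n}$, whereupon Theorem~\ref{one} yields nonsequenceability.

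The principal obstacle is the small-order case. In a cyclic $\STS{n}$ a given block typically lies in several cyclic translates of the base almost parallel class, so deleting only a few blocks can eliminate several translates at once and drop the number of missed points covered by cyclic translates below $n-1$. One must therefore invoke the noncyclic almost parallel classes of each small design, and since only five orders are in play, an exhaustive computational check is the natural closing step.
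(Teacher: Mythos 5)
Your handling of the large orders is correct but follows a genuinely different route from the paper. The paper deletes the $a$ blocks from a single almost parallel class --- the one missing a fixed point $x_0$ --- and asserts that the hypothesis of Theorem~\ref{one} is preserved; you instead delete $a$ blocks of the underlying \GDD{3}{12^u} (or \GDD{3}{12^u18^1}), none of which occur in any of the almost parallel classes assembled in the proof of Theorem~\ref{A}, so every one of those classes survives verbatim and Theorem~\ref{one} applies immediately. Your block counts $24u(u-1)$ and $24u(u+2)$ are correct and both comfortably exceed $(n-1)/3$ ($=4u$, respectively $4u+6$) for $u\geq 3$, so this part of your argument is complete, and it is arguably easier to verify than the paper's version because nothing touched by the deletion is ever used again.

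The gap is in the five small orders $n\in\{13,19,25,31,43\}$, which you reduce to an unperformed computer search. You have correctly diagnosed why these cases are delicate: in a cyclic design a block can lie in several translates of the displayed almost parallel class (in the \STS{13}, each translate of the base block $\{0,2,7\}$ lies in three of the thirteen translates of that class, so deleting one such block kills the classes for three points at once, and the translates of $\{0,1,4\}$ alone are too few to absorb all values of $a$ up to $(n-1)/3$). But diagnosing the obstruction is not the same as overcoming it; as written, the corollary is not proved for these five orders, and they cannot be dismissed as negligible since they are exactly the ingredients the recursion is built from. To close the argument you must either actually exhibit, for each such $n$ and each $a$, a set of $a$ blocks whose removal leaves almost parallel classes missing at least $n-1$ distinct points (a finite check, but one that has to be done), or find a structural reason that enough noncyclic almost parallel classes exist. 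For comparison, the paper disposes of all orders uniformly in one sentence --- fix $x_0$, delete $a$ blocks of the almost parallel class missing $x_0$, and claim the hypothesis of Theorem~\ref{one} persists --- without addressing the block-sharing issue you raise; so your concern is a substantive one that applies to the published proof as well, but your proposal does not yet resolve it.
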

\begin{proof}
The $\STS{n}$ constructed in Theorem~\ref{A} has, for each point $x$, an almost parallel class that does not contain $x$. Fixing any point $x_0$
and removing $a$ of the blocks in the almost parallel class that does
not contain $x_0$  constructs a $\PSTS{n}$
of size $\frac{1}{3}\binom{n}{2}-a$ that still satisfies 
Theorem~\ref{one}.
\end{proof}

\section*{Closing remarks.}
Brian Alspach gave a talk on August 9, 2018 entitled ``Strongly Sequenceable 
Groups'' at the 4th Kliakhandler Conference held at MTU. In this 
talk, among other things, the notion of sequencing diffuse posets was introduced and the following research problem was posed:
\begin{quotation}
``Given a triple system of order $n$ with $\lambda=1$, define a poset $P$ by letting its elements be the triples and any union of disjoint triples. This poset is not diffuse in general, but it is certainly possible that $P$ is sequenceable.''
\end{quotation}
Our article shows surprisingly that there are in fact triple systems where 
the poset $P$ is nonsequenceable. However, it still remains an interesting open question to construct a $\STS{n}$ that is nonsequenceable when $n \equiv 3 \pmod{6}$.
A more ambitious research problem would be to determine necessary and sufficient conditions for when an $\STS{n}$ is sequenceable.

\end{document}